%
%
%
%
%
\documentclass[smallextended]{amsart}       
%
%
\usepackage{graphicx,hyperref,colortbl,amssymb,adjustbox}
\newtheorem{theorem}{Theorem}
\newtheorem{conjecture}{Conjecture}

\begin{document}

\title{Some remarks on the Game of Cycles}


\author{Robbert Fokkink \and  Jonathan Zandee} 

\keywords{Impartial Combinatorial Games, Directed graphs}
\subjclass{91A46, 91A68}
\begin{abstract}
The Game of Cycles is an impartial game on a planar graph
that was introduced by Francis Su. In this short note we
address some questions that have been raised on the game,
and raise some further questions. We assume that the reader
is familiar with basic notions from combinatorial game theory.
\end{abstract}
\maketitle

\section{The rules of the game}
\label{intro}

The Game of Cycles was invented by Francis Su~\cite{Su}. It is an impartial
game that is played on a planar graph $\Gamma\subset\mathbb R^2$, which is
connected and simple. 
The complement $\mathbb R^2\setminus\Gamma$ falls apart into a finite number of domains. The 
edges around the bounded domains are called the 
\emph{cells}.
Initially $\Gamma$ is undirected. Alice and Bob take turns and direct one hitherto 	undirected edge. 
It is not allowed to create a sink (a vertex
with all edges pointing inward) or a source (all edges pointing outward). 
If a player succeeds in forming a \emph{cycle cell} in which all
edges point in the same direction (either clockwise or anti-clockwise), then
that player wins. Otherwise the last player to make a move wins. Given a graph, the problem is to determine whether it is winning for Alice or winning for Bob. This problem has been solved for some specific graphs in~\cite{Cycles, Barua, Lin, Mathews}.

It is possible to remove loose ends from the graph.
Suppose a vertex~$v$ is incident with only one edge~$e$. 
Then~$e$ is \emph{unmarkable} because if it is directed, then $v$ is a source or a sink.
Since $e$ is never directed, the sink/source restriction never applies to
its other vertex $w$. We might as
well remove $v$ and $e$ from the graph, and declare $w$ to be a special vertex,
which is allowed to be a source or a sink
(this operation of removing unmarkable edges
is called \emph{trimming} in~\cite{Mathews}).
\begin{figure}[htbp]
	\centering
		\includegraphics[width=0.60\textwidth]{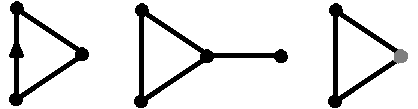}
	\caption{On the left, Alice has won the game by directing an edge of the triangle.
	Every move for Bob loses. The lollipop graph in the center is equivalent to the triangle
	with one special vertex on the right. 
        Alice wins by the same move, 
        but if she directs
	one of the other two edges, she loses. The lollipop graph has Grundy value 2.}
	\label{fig:lollipop}
\end{figure}
We only consider professional players. If a player wins by forming a cycle
cell, then that can only happen if the other player had no choice on the previous move. 
A professional player would have resigned. 
Therefore, we do not really change the game if we do not allow
moves that enable
the closure of a cycle cell (called \emph{death moves} in~\cite{Cycles}). This has the pleasing effect that the game is
over once there are no more moves, i.e., it satisfies the normal
play condition, as in a standard impartial game~\cite{Albert}.

In our (equivalent) version of the Game of Cycles, we allow special vertices and do not allow moves that
enable cycle cells. It goes without saying that a vertex is special if it has degree~$1$. 
The results below are 
based on the work
of the second author~\cite{	Zandee}.
 
\section{Previous results}

The Game of Cycles was analysed by Ryan Alvarado et al~\cite{Cycles}, who were
able to find winning strategies for graphs $\Gamma$ that have certain symmetries.
These winning strategies are copycat strategies. Either Bob copies the moves
of Alice under the symmetry, or Alice makes a special first move on a unique edge
that is fixed by the symmetry, and then she copies Bob's moves. 
More specifically, suppose $h$ is a graph isomorphism of $\Gamma$ such that $h^2$ is
the identity (an \emph{involution}) that fixes at most one edge.
If none of the edges is fixed, then Bob is the copycat. If only one edge is fixed,
then Alice marks it and she is the copycat.
The idea behind the copycat strategy is that if $e$ can be marked
then $h(e)$ can be marked,
and so the copycat always has the last move.
Copycats mark
 \emph{in the opposite direction}.
If the opponent directs~$e$ 
from vertex $v$ to vertex $w$, then the copycat directs $h(e)$ from $h(w)$ to $h(v)$. 
In this way,
the copycat avoids sinks and sources. 
The copycat also needs to avoid death moves.
If each cell is either invariant or disjoint from itself under $h$, then
Alvarado et al~\cite{Cycles} show that the copycat's move is never a death move. 
Thus the copycat wins the game if such an involution $h$ exists. 
\begin{figure}[htbp]
	\centering
		\includegraphics[width=0.7\textwidth]{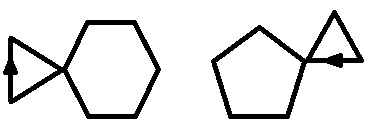}
	\caption{Winning moves for Alice for the triangle connected to an $n$-gon. If $n$
	is even, Alice marks the unconnected edge of the triangle and wins by a copycat
        strategy (this is the unique fixed edge under a reflection). 
	If $n$ is odd, Alice marks a connected edge. Now the unconnected edge of the triangle is unmarkable.
        Alice prevents the connecting vertex between $n$-gon and triangle from becoming a source or sink,
        and wins the game.}
	\label{fig:ngon}
\end{figure}

For all graphs that are solved in \cite{Cycles},
the \emph{size} of the graph (number of edges)
determines who wins the game. Alice wins if the size is odd
and Bob wins if the size is even. One of the questions in~\cite{Cycles} was
whether this is true for all graphs. 
Some care is required, as the lollipop graph has size $4$
but is won for Alice.
Kailee Lin~\cite{Lin} specified the question and asked: 
is a graph is winning for Alice if and only
if its number of markable edges is odd?
She established that this \emph{parity conjecture} 
is true for general lollipop graphs
($n$-gons with loose ends attached). 
All games that are solved in~\cite{Cycles, Barua, Lin} satisfy
this conjecture.
However, Leah Karker and Shanise Walker~\cite{Su2} found a counterexample:
a triangle connected to an $n$-gon is winning for Alice for all $n\geq 4$,
see Fig~\ref{fig:ngon}.
We return to this example at the end of our paper.

A tree has no cycle cells, so only the source/sink restriction applies.
For tree, the game may seem simple, but it turns out to be challenging and even
spider graphs (only one vertex of degree $> 2$) are non-trivial.
Bryant Mathews~\cite{Mathews} showed that if $\Gamma$ is a three-legged spider,
then its Grundy value is zero if all legs of $\Gamma$ are even (note that we remove loose
ends, so in our game all edges in a tree are markable). The proof is long 
and falls apart into many different cases. For trees the parity conjecture may hold.

\begin{conjecture}[parity conjecture for trees]\label{conj1}
The Grundy value of a tree is zero if and only if its size is even.
\end{conjecture}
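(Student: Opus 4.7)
The plan is to prove the conjecture by induction on the size $n$ of the trimmed tree. The cases $n=0$ and $n=1$ are immediate: the empty tree has Grundy value $0$, and a single markable edge (necessarily incident to two special vertices after trimming) has Grundy value $1$. For the inductive step I would establish both directions simultaneously by constructing explicit winning strategies: a second-player strategy for Bob when $n$ is even and a first-player strategy for Alice when $n$ is odd.

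For the even case the natural approach is a pairing strategy. I would attempt to define a perfect matching $M$ on the set of markable edges with the property that whenever Alice directs an edge $e$, Bob can direct $M(e)$ in some direction that creates no source or sink (the death-move restriction is vacuous on a tree). The most plausible candidates use pairs of edges incident to a common non-special vertex $v$: if Alice's move contributes an in-edge (resp.\ out-edge) at $v$, Bob responds on the paired edge with an out-edge (resp.\ in-edge) at $v$, keeping the local in/out count balanced and effectively \emph{freezing} the constraint at $v$. For the odd case Alice would play a designated ``central'' edge first, chosen so that the remaining position decomposes into a forest of trimmed trees of even total size on which the pairing strategy applies, with Alice now in the role of copycat.

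The main obstacle is the construction of the matching $M$. A tree does not in general admit a perfect matching in its line graph, and even when it does the pairing must remain compatible with the source/sink bookkeeping over the entire play, not merely move by move. Consequently, the inductive step will almost certainly require a decomposition of $T$ at a carefully chosen vertex or edge into smaller trimmed trees whose sizes have matched parities, together with a local analysis verifying that the paired response is never blocked further down the line. Mathews' proof for three-legged spiders with all legs even is already heavy with case analysis, so a uniform argument for arbitrary trees is likely to demand a cleaner structural invariant — perhaps an edge-pairing derived from a canonical root or orientation of $T$, or an equivalence between the tree game and a simpler impartial game on the same vertex set — and identifying such an invariant is the step I expect to be the genuine difficulty.
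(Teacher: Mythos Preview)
The statement you are attempting to prove is labelled in the paper as a \emph{conjecture}, not a theorem; the paper offers no proof and explicitly leaves it open. The only case the paper settles is that of \emph{branching} trees (all internal vertices of degree at least three), via a rooted-tree argument that tracks the parity of the number of ``odd parents'' and drives a copycat strategy. The paper then remarks that the difficulty lies precisely in trees with many non-branching vertices, such as spiders, and cites Mathews' long case analysis for three-legged spiders as evidence that the general problem is hard.

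Your proposal is candid about being a plan rather than a proof, and you correctly locate the crux: constructing a perfect matching $M$ on the markable edges so that the paired response is always legal. You also correctly flag that this is exactly where the argument is likely to fail. Along a path of degree-$2$ vertices there is no local ``pair at a common vertex'' available that simultaneously balances both endpoints, which is why the paper's branching-tree proof does not extend and why your pairing heuristic breaks down in the same place. So what you have written does not close the gap the paper leaves open; it is an accurate diagnosis of the obstacle, but not a proof.
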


\begin{figure}[ht]
	\centering
		\includegraphics[width=0.50\textwidth]{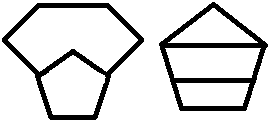}
	\caption{Some sweet graphs. The ice cream cone on the left has size ten and Grundy value zero. The layered cake on the
	right has size nine and Grundy value one.}
	\label{fig:suchallenge}
\end{figure}
Alvarado et al~\cite{Cycles} left two computational challenges for the reader, as illustrated
in Fig~\ref{fig:suchallenge}. They both satisfy the parity conjecture.

\section{New examples}

The butterfly graph 
can be embedded in two ways in the plane: open wings (standard) or closed wings (one triangle
bounded by the other), see Fig~\ref{fig:butterfly}. 
If the butterfly closes its wings, the inner cell is a part of the outer cell and
therefore only the inner cell matters. 
\begin{figure}[h]
	\centering
		\includegraphics[width=0.60\textwidth]{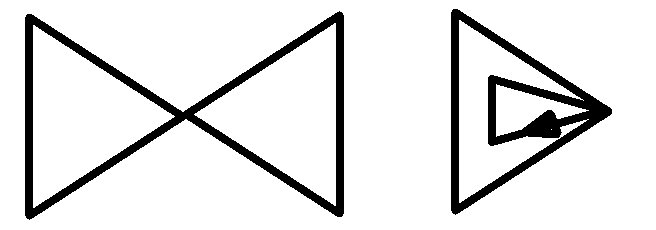}
		\caption{The butterfly graph (left) is winning for Bob, by a copycat strategy.
		If the butterfly closes its wings (right), then Alice wins 
        by directing the marked edge on the inner wing, which
		makes the adjacent edge
		on that wing unplayable. Four playable edges remain and Alice wins as the second player
		in a game on a graph of size four.}
	\label{fig:butterfly}
\end{figure}
The Grundy number of the
open winged butterfly is zero. 
The Grundy number of the closed winged butterfly is three. A graph can be winning or losing 
(for Alice) depending on how it is embedded in the plane.

The wedge of $n$-gons in Fig~\ref{fig:ngon} and the butterfly graph in Fig~\ref{fig:butterfly}
can be disconnected by removing a single vertex.
They are not $2$-connected. 
One of the questions that we asked ourselves is: does the parity conjecture hold for
$2$-connected graphs?
It turns out that it does not.
There exists a $2$-connected
graph of odd size that is winning for Bob, as illustrated in~Fig~\ref{fig:size9}. 
It is invariant under reflection in
the central edge. Note that the central two cells are not disjoint
under this involution, so the copycat result of~\cite{Cycles} does not
apply.
Indeed, the first move in the copycat strategy directs the central edge.
However, Bob counters by directing the left outermost edge in the opposite direction.
After this move, the remaining three edges in the left hand box are unplayable.
Only the edges in the right hand box remain and Bob wins as the
second player on a graph of size four. 
Alice essentially has three other first moves, but
they are all losing. 
The full analysis of this graph is too elaborate to reproduce here
and can be found in~\cite{Zandee}.
\begin{figure}[hb]
	\centering
		\includegraphics[width=0.70\textwidth]{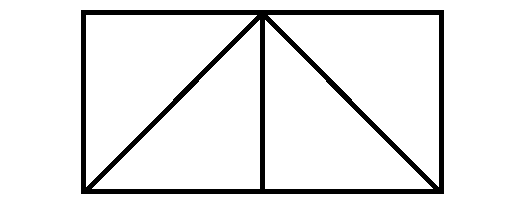}
		\caption{This graph of size nine has Grundy number zero and is
			a counterexample to the parity conjecture.}
	\label{fig:size9}
\end{figure}

We say that a tree is \emph{branching} if all internal vertices have degee~$>2$.

\begin{theorem}
A branching tree is winning for Alice if and only if its size is odd.
Its Grundy number is~$1$ in that case.
\end{theorem}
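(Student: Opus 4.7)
The plan is to prove by strong induction on $|E(T)|$ that the Grundy value of a branching tree equals its size modulo $2$; this implies both halves of the theorem.

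The base case handles the stars $K_{1,n}$ with $n\ge 3$, the only branching trees with a single internal vertex. A position on such a star is described by the triple $(a,b,u)$ of edges at the centre that are directed out, directed in, and undirected respectively. An induction on $u$ shows that every reachable position has Grundy value $u\bmod 2$. The only sub-case requiring care is $u=1$ with $a=0$ or $b=0$, where exactly one of the two candidate moves on the last edge is legal (the other would create a source or sink), so the Grundy is $\mathrm{mex}\{0\}=1$. In particular the initial position has Grundy $n\bmod 2$ as required.

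For the inductive step, let $T$ be a branching tree with at least two internal vertices. Because degree-$2$ vertices are forbidden, the internal vertices form a subtree $T_{\mathrm{int}}$ of $T$. Pick a leaf $u$ of $T_{\mathrm{int}}$: then $u$ has exactly one ``stem'' edge $uu'$ to another internal vertex and $k=d(u)-1\ge 2$ pendant leaves $\ell_1,\dots,\ell_k$. The winning player partitions these petals into pairs (with one leftover if $k$ is odd) and uses a pairing response: whenever the opponent directs a petal $u\ell_i$, respond on its partner $u\ell_j$ in the opposite direction at $u$. The response is always legal, because $u$ retains an undirected edge (another petal or the stem) and $\ell_j$ is special, and it keeps the petals' net in/out contribution at $u$ equal to zero. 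When $k$ and $|E(T)|$ are both odd, Alice opens with the unpaired petal. After the petals at $u$ are exhausted, the remaining game is equivalent to the game on the smaller tree $T'=T-\{\ell_1,\dots,\ell_k\}$, and the induction hypothesis applies.

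The principal obstacle is the sub-case $d(u')=3$, in which $T'$ has $u'$ of degree $2$ and is no longer a branching tree; handling this requires iteratively contracting the emerging degree-$2$ chains and checking that each contraction preserves both the parity of $|E(T)|$ and the Grundy value of the initial position. A secondary subtlety is to rule out premature ``stuck'' configurations where some undirected edge has both directions forbidden: keeping the stem $uu'$ undirected throughout the petal-pairing phase ensures that $u$ never becomes critical. The sharper claim that the Grundy value equals exactly $1$ (not merely some positive value) when $|E(T)|$ is odd follows by verifying inductively that every first move leads to a position of Grundy $(|E(T)|-1)\bmod 2$, so that the mex of the set of option-Grundies is $|E(T)|\bmod 2$.
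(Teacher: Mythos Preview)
Your induction has a structural gap. The hypothesis concerns only \emph{initial} (fully undirected) branching trees, yet the inductive step needs Grundy values of \emph{partially directed} positions: both the position ``after the petals at $u$ are exhausted'' (which carries directed petals and possibly directed edges elsewhere) and, for the final mex computation, every position reached by a single first move. None of these is the initial position of a smaller branching tree, so the hypothesis does not apply. Nor can you rescue matters by strengthening the hypothesis to ``every reachable position has Grundy equal to the number of undirected edges mod $2$'': on the tree with two internal degree-$3$ vertices $u,v$ joined by an edge, each carrying two pendant leaves, one can legally direct all four pendant edges toward the centre, leaving the edge $uv$ unmarkable---a reachable position with one undirected edge and Grundy value $0$.

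The strategy itself is also incomplete. Your pairing response tells the winning player what to do only when the opponent touches a petal at the fixed vertex $u$; you never specify the response when the opponent plays on the stem $uu'$ or anywhere else in the tree, and ``apply the induction hypothesis on $T'$'' is unavailable because the current position is not the initial position of $T'$. (Your stated obstacle at $d(u')=3$ is also misdiagnosed: in $T'=T-\{\ell_1,\dots,\ell_k\}$ the vertex $u'$ retains its full degree; a drop at $u'$ only occurs if you additionally trim the new leaf $u$.) The paper sidesteps all of this with a \emph{global} pairing rather than a local one: it lets Alice's first move---which is allowed to be completely arbitrary---select a root, and thereafter the winning player responds at the same parent when that parent has evenly many remaining children, or at some odd parent otherwise, tracking only the parity of the number of odd parents. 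Because every first move works when the size is odd, all options have Grundy $0$ and the value $1$ follows at once, without evaluating any partially directed position.
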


\begin{proof}
Essentially, we have a take-away game in which players remove chips one by one.
The winning player needs to avoid sinks and sources at internal vertices.
On her first move, Alice directs an edge at an inner vertex. We declare
that vertex (either one, if both are inner vertices)
to be the \emph{root} of the tree. Thus we have ordered the tree
and can speak about parent and child vertices.
We say that a parent with an odd number of children is an \emph{odd parent}.
The parity of $p$, the number of odd parents, equals
the parity of the size of the tree.
If an edge is directed then we think of this as breaking the tie
between parent and child, as if the edge is removed. 
Because of this the parity of $p$ changes with each move in the game.

The winning player applies a copycat strategy. If the size of the tree is even,
then Bob is the winning player. If Alice directs the edge of an even parent,
then Bob counters and directs another edge of that parent in the
opposite direction. The sink/source condition now is no longer relevant
for this vertex. If Alice directs an 
edge of an odd parent, then Bob also selects an edge
of an odd parent (he can, $p$ is odd on his moves). 
Each internal vertex $v$ will go through the stage of an even number of children.
In that case, an edge between $v$ and its children will first be directed
by Alice, and immediately countered by Bob. Thereupon the sink/source restriction will be lifted. 
Therefore, Bob always has a move even if one child remains.
He wins the game.
If the size of the tree is odd, then Alice is the winning player by the
exact same argument, because now $p$ is odd whenever she has the move.

Note that we allowed an arbitrary first move of Alice, and used it to
select a root of the tree. If the size of the tree is odd, Alice wins
the game regardless of her first move. Therefore, the Grundy value
is one if the size is odd.
If the size is even, then Bob wins and the Grundy value is zero.
\end{proof}

The parity conjecture holds for branching trees. The difficulty lies in trees
with many non-branching vertices, such as spiders.

\section{Grundy numbers}

There does not seem to be a straightforward
graph invariant to decide if a graph is winning for Alice or for Bob.
The only possible algorithmic approach to the game seems to be by standard backward induction,
which is only feasible for small graphs (size up to around ten).
It is not hard to find positions in the game
(that is, graphs with some directed edges) of arbitrary Grundy numbers,
see~\cite{Barua, Lin, Mathews, Zandee}, but
it is a computational challenge to find a graph with a large Grundy value. 
We did not get very far.
The maximum
Grundy number that we were able to find is 3, see Fig~\ref{fig:mill}, \ref{fig:wedged}, and~\ref{fig:boxes} below. 
Can anybody find a graph
of Grundy number four or more?

\begin{figure}[htbp]
	\centering
		\includegraphics[width=1.00\textwidth]{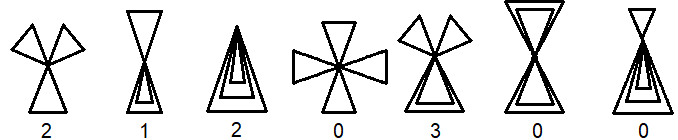}
		\caption{Windmill graphs and their Grundy numbers. }
	\label{fig:mill}
\end{figure}

\begin{figure}[htbp]
	\centering
		\includegraphics[width=0.90\textwidth]{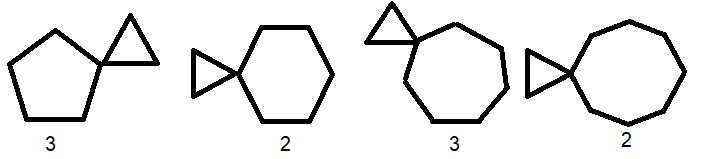}
  		\caption{Fishy graphs: wedges of $n$-gons and a triangle. }
	\label{fig:wedged}
\end{figure}

\begin{figure}[htbp]
	\centering
		\includegraphics[width=0.90\textwidth]{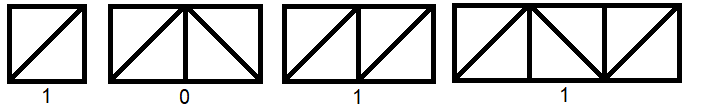}
  		\caption{Box graphs and their Grundy numbers. }
	\label{fig:boxes}
\end{figure}

Leah Karker and Shanise Walker showed that the
fishy graphs in Fig~\ref{fig:wedged} are all winning for Alice. 
Our computational results suggest that their Grundy numbers are $2$ if $n$ is even and
$3$ if $n$ is odd. We prove that these are the Grundy numbers if the connecting
vertex is special.

\begin{theorem}
The Grundy number of an $n$-gon with a special vertex is zero if $n$ is even and
one if $n$ is odd, expect for the triangle which has Grundy number two.
\end{theorem}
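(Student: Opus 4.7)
The plan is to combine the copycat strategy from Section~2 with a detailed first-move analysis. The $n$-gon with special vertex $v_0$ admits a reflection $h$ through $v_0$: for $n$ even $h$ fixes no edge, while for $n$ odd $h$ fixes the unique edge $e_{(n-1)/2}$ opposite to $v_0$. Since the $n$-gon has a single cell, which is invariant under $h$, the copycat strategy of Alvarado et al.\ applies. For $n$ even, Bob wins by mirroring Alice under $h$, so the Grundy number is $0$. For $n$ odd, Alice wins by first directing the fixed edge and then mirroring Bob; this both shows the Grundy number is at least $1$ and exhibits one of Alice's first moves that leads to a position of Grundy $0$.

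For $n = 3$ I would handle the remaining cases by direct enumeration: each of Alice's four first moves on a non-fixed edge forces a unique legal Bob response (placing the other non-fixed edge so that $v_0$ becomes a source or sink), after which the middle edge is frozen. These four positions therefore have Grundy $1$; combined with Grundy $0$ from the two fixed-edge moves, the mex is $2$. For $n \geq 5$ odd, the goal is to show that no non-fixed first move leads to Grundy $1$. After Alice directs $e_j$ with $j \neq (n-1)/2$, the position consists of one directed edge with two undirected \emph{arms} of lengths $j$ and $n-1-j$ meeting at $v_0$. I would introduce the arm game---an $m$-edge path anchored at one end by a virtual directed edge and terminating at a special vertex at the other end---and prove by induction on $m$ that its Grundy number equals $m$. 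The Sprague--Grundy sum of two such arms then has value $j \oplus (n-1-j)$, and the only interaction between the two arms is the cycle restriction, which forbids the moves that would complete a monochromatic directed cycle. A case analysis on the parities of $j$ and $n-1-j$ should show that the cycle-adjusted Grundy is always at least $2$ when $j \neq (n-1)/2$, so the mex of all first-move Grundy values is exactly $1$.

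The main obstacle is this last step: tracking precisely how the cycle restriction modifies the arm-sum Grundy. Every death move---one whose sequel would close the monochromatic cycle---must be identified and excised from the game tree before Grundy values are recomputed. For $n = 5$ a direct enumeration confirms the cycle-adjusted Grundy is $3$ for both non-fixed first moves (the naive arm-sum values $0 \oplus 4 = 4$ and $1 \oplus 3 = 2$ are shifted by $\mp 1$), consistent with the theorem. For larger odd $n$ I expect the modification to be controlled by a strengthened induction on the arm lengths that covers the intermediate positions arising as the two arms shrink during play.
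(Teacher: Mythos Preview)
Your treatment of the even case and of the triangle is fine and matches the paper's.  The difficulty is entirely in the case of odd $n\ge 5$, and there your proposal is not a proof but a programme: you set up an arm decomposition, conjecture that each arm has Grundy value equal to its length, and then hope that the cycle restriction shifts the XOR-sum $j\oplus(n-1-j)$ in a controllable way.  You verify this for $n=5$ by hand and write that for larger $n$ you ``expect the modification to be controlled by a strengthened induction.''  That is precisely the step that carries all the content, and you have not done it.  Note also that the two arms are not a genuine disjunctive sum even before the cycle correction: they share the anchor edge that Alice just directed, so each arm sees the same forced orientation at that end, and the death-move restriction couples them globally.  Tracking Grundy values through this coupling for all $j$ and all odd $n$ is a substantial undertaking, and nothing in the proposal indicates how the induction would close.

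The paper sidesteps this computation entirely.  Instead of showing that every non-fixed first move leads to a position of Grundy value $\ge 2$, the paper shows directly that the whole game has Grundy value $1$ by proving that the sum of the $n$-gon with a single pass (a game of Grundy value $1$) is a second-player win.  Bob plays copycat under the reflection $h$, extended so that the fixed edge $f$ and the pass are paired with one another.  The only danger is that Bob's prescribed reply $h(e)$ might be a death move; a parity count shows that whenever this happens the pass is still available, and Bob takes it.  After Bob passes, the two remaining edges are $f$ and $h(e)$, and one checks (using $n>3$ to ensure $f$ and $h(e)$ are non-adjacent) that both admit only death moves, so Alice is stuck.  Hence the sum has Grundy value $0$, and the $n$-gon itself has Grundy value $1$.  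This pass-pairing trick is the idea your proposal is missing; it replaces an open-ended Grundy computation over all intermediate positions by a single explicit second-player strategy.
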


\begin{proof}
Place the $n$-gon in the plane such that it is invariant under reflection 
in the $x$-axis and such that the special vertex is on this axis. This reflection
is an involution and the copycat strategy works. 
If the opponent marks one edge of the special vertex, then the copycat responds
by marking its other edge, so the special vertex is not really special for the copycat.
Bob wins if $n$ is even and
Alice wins if $n$ is odd. We only have to determine by `how much' Alice wins.

If $n=3$ then we have the graph of Fig~\ref{fig:lollipop}.
Alice has a winning move and a losing move, which implies that the Grundy number is two. The case
of odd $n>3$ remains. We prove that it is losing for Alice if we add
the option of a \emph{pass}. This option is available once and only once. If one of the
players passes, then the pass is off the table. 
The reflection in 
the $x$-axis fixes one edge denoted $f$.
We modify the reflection by defining $h(f)$ to be the pass (and vice versa).
If Alice marks $e$ then Bob marks $h(e)$
in the opposite direction. If Alice passes, then Bob marks $f$ in any possible direction.
Observe that $h(e)$ is unmarked and that an adjacent edge of $h(e)$ is directed
if and only if its corresponding adjacent edge at $e$ is directed.
Also note that the adjacent edges of $f$ are copies under $h$.
If Bob needs to direct $f$, either both of its adjacent edges are unmarked or
they point in the same direction. 
Therefore Bob can direct $f$ if Alice passes.
However, there is a catch. Directing $h(e)$ as prescribed may be a death move.
If Alice keeps directing edges other than $f$
clockwise, then Bob follows suit until only $f$ remains. Alice then wins by
closing the cycle. In this case, Bob's final move is a death move, which is
unprofessional. 
We need to modify copycat into a professional strategy: if directing $h(e)$
as prescribed is a death move, then Bob takes a pass.
Is this possible? If $h(e)$ is a death move, then $n-2$ edges are marked
when it is Bob's move. If the pass is unavailable, then the game has gone through
$n-1$ moves, which is an even number. However, if Bob moves, then the 
game has gone through an odd number of moves,
so this cannot happen. If $n-2$ edges are marked and Bob has the move, then he may pass.
\begin{figure}[hb]
	\centering
		\includegraphics[width=0.60\textwidth]{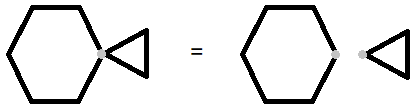}
  		\caption{The game on a wedge of two $n$-gons at a special vertex is a sum of two games. }
	\label{fig:sum}
\end{figure}
Now two edges remain for Alice,
one of which is $f$. It has no special vertex and at least one of its adjacent
edges has to be marked. It can only be directed in one way and that is a 
death move. So Alice cannot mark $f$.
The other remaining edge is $h(e)$.
If it has no special vertex, it can only be directed in one way for the same reason as $f$.
This is a death move and not allowed.
If $h(e)$ has a special vertex, then it is adjacent to $e$ at this vertex.
The other adjacent edge has already been marked (here we need $n>3$ to rule out
that $f$ is adjacent to $h(e)$).
Again, $h(e)$ only admits a death move.
After Bob passes, Alice is out of moves.
It follows that this game with one pass is winning for Bob. Since one pass is equivalent
to a game of Grundy number 1, we conclude that odd $n$-gons with $n>3$
have Grundy number~$1$.
\end{proof}

\section{Acknowledgement}

We would like to thank Francis Su for a helpful communication.


\noindent
Institute of Applied Mathematics\\
Delft University of Technology\\
Mekelweg 4 \\
2628 CD Delft, The Netherlands\\
\texttt{r.j.fokkink@tudelft.nl; j.s.zandee@student.tudelft.nl}

\end{document}